\newtheorem{thm}{Theorem}
\newtheorem{prop}{Proposition}
\newtheorem{defi}{Definition}
\newtheorem{cor}{Corollary}
\newtheorem{rem}{Remark}
\begin{document}
\title{An asymptotic formula of the divergent bilateral basic hypergeometric series }
\author{Takeshi MORITA\thanks{Graduate School of Information Science and Technology, Osaka University, 
1-1  Machikaneyama-machi, Toyonaka, 560-0043, Japan.} }
\date{}
\maketitle
\begin{abstract}
We show an asymptotic formula of the divergent bilateral basic hypergeometric series
 ${}_1\psi_0 (a;-;q,\cdot )$ with using the $q$-Borel-Laplace method. We also give the limit 
$q\to 1-0$ of our asymptotic formula.

\end{abstract}
\section{Introduction}
In this paper, we show an asymptotic formula of the divergent bilateral basic hypergeometric series 
\[{}_1\psi_0 (a;-;q;x ):=\sum_{n\in\mathbb{Z}} (a;q)_n\left\{(-1)^nq^{\frac{n(n-1)}{2}}\right\}^{-1}x^n.
\]
Here, $(a;q)_n$ is the $q$-shifted factorial;
\[(a;q)_n:=
\begin{cases}
1, &n=0, \\
(1-a)(1-aq)\dots (1-aq^{n-1}), &n\ge 1,\\
[(1-aq^{-1})(1-aq^{-2})\dots (1-aq^n)]^{-1}, &n\le -1
\end{cases}
\]
moreover, $(a;q)_\infty :=\lim_{n\to \infty}(a;q)_n$ and 
\[(a_1,a_2,\dots ,a_m;q)_\infty:=(a_1;q)_\infty (a_2;q)_\infty \dots (a_m;q)_\infty.\]

The $q$-shifted factorial $(a;q)_n$ is a $q$-analogue of the shifted factorial 
\[(\alpha )_n=\alpha \{\alpha +1\}\cdots \{\alpha +(n-1)\}.\]
The series ${}_1\psi_0(a;-;q,\cdot )$ is related to the bilateral basic hypergeometric series ${}_1\psi_1(a;b;q,\cdot )$. At first, we review the series
\[
 {}_1\psi_1(a;b;q,z):=\sum_{n\in\mathbb{Z}}\frac{(a;q)_n}{(b;q)_n}z^n.
\]
This series has Ramanujan's summation formula \cite{GR} 
\begin{equation}
 {}_1\psi_1(a;b;q,z)=\frac{(q,b/a,az,q/az;q)_\infty}{(b,q/a,z,b/az;q)_\infty}, \qquad 
\left|b/a\right|<|z|<1,\label{rsum}
\end{equation}
which was first given by Ramanujan \cite{Hardy}. This formula is considered as a extension of the $q$-binomial theorem \cite{GR,Askey};
\[\sum_{n\ge 0}\frac{(a;q)_n}{(q;q)_n}z^n=\frac{(az;q)_\infty}{(z;q)_\infty},\qquad |z|<1.\]
We also regard the summation \eqref{rsum} as a $q$-analogue of the bilateral binomial theorem \cite{Horn} discovered by M.~E.~Horn. If $\alpha$ and $\beta$ are complex numbers $\Re (\beta -\alpha )>1$ and $z$ is a complex number with $|z|=1$, then
\begin{equation}
\sum_{n\in\mathbb{Z}}\frac{(\alpha )_n}{(\beta )_n}z^n
=\frac{(1-z)^{\beta -\alpha -1}}{(-z)^{\beta -1}}\frac{\Gamma (1-\alpha )\Gamma (\beta )}{\Gamma (\beta -\alpha )}.\label{bbt1}
\end{equation}
We remark that the limit $q\to 1-0$ of \eqref{rsum} with suitable condition gives the bilateral binomial theorem \eqref{bbt1}. 

The series ${}_1\psi_1(a;b;q,z)$ satisfies the following $q$-difference equation 
\[
\left(\frac{b}{q}-az\right)u(qz)+(z-1)u(z)=0.
\]
In this paper, we study the degeneration of this equation as follows;
\[
\left(\frac{1}{q}-ax\right)\tilde{u}(qx)+x \tilde{u}(x)=0.
\]
This equation has a formal solution  
\begin{equation}
\tilde{u}(x)={}_1\psi_0(a;-;q,x).
\label{b2s}
\end{equation}
But $\tilde{u}(x)$ is a divergent around the origin \cite{GR} and its properties are not clear. In section three, we show an asymptotic formula of the series \eqref{b2s} with using the $q$-Borel-Laplace transformations. The $q$-Borel-Laplace transformations are introduced in the study of connection problems on linear $q$-difference equations between the origin and the infinity. 

Connection problems on $q$-difference equations with regular singular points are studied by G.~D.~Birkhoff \cite{Birkhoff}. The first example of the connection formula is given by G.~N.~Watson \cite{W} in 1910 as follows;
\begin{align}
{}_2 \varphi_1(a,b;c;q,&x )= 
\frac{(b,c/a;q)_\infty (a x,q/ a x;q)_\infty }{(c, b/a;q)_\infty (  x,q/   x;q)_\infty } 
{}_2 \varphi_1\left(a,aq/c;aq/b;q,cq/abx \right) \notag \\
&+\frac{(a,c/b;q)_\infty (b x, q/ b x;q)_\infty }{(c, a/b;q)_\infty (  x,q/   x;q)_\infty } 
{}_2 \varphi_1\left(b, bq/c; bq/a; q, cq/abx \right). \label{wat}
\end{align}
The function ${}_2\varphi_1(a,b;c;q,x)$ is the basic hypergeometric series 
\[{}_2\varphi_1(a,b;c;q,x)=\sum_{n\ge 0} \frac{(a,b;q)_n}{(c;q)_n(q;q)_n}x^n,\]
which is introduced by E.~Heine \cite{hein}. This series satisfies the second order linear $q$-difference equation 
\begin{equation}
(c-abqx)u(q^2x)-\left\{c+q-(a+b)qx\right\}u(qx)+q(1-x)u(x)=0.
\label{heine}
\end{equation}
around the origin. On the other hand, the equation \eqref{heine} has a fundamental system of solutions 
\begin{equation*}
v_1(x)=\frac{\theta (-ax)}{\theta (-x)}{}_2\varphi_1\left(a,\frac{aq}{c};\frac{aq}{b};q,\frac{cq}{abx}\right), 
\quad v_2(x)=\frac{\theta (-bx)}{\theta (-x)}{}_2\varphi_1\left(b,\frac{bq}{c};\frac{bq}{a};q,\frac{cq}{abx}\right)
\end{equation*}
around the infinity \cite{Z0}. Here, $\theta (\cdot )$ is the theta function of Jacobi (see section two).  In the connection formula \eqref{wat}, we remark that the connection coefficients are given by $q$-elliptic functions. 

Other connection formulae, especially connection formulae of irregular singular type $q$-special functions has not known for a long time. Recently, C.~Zhang gives some connection formulae by two different types of $q$-Borel transformations and $q$-Laplace transformations \cite{Z0, Z1, Z2}, which are introduced by J.~Sauloy\cite{sauloy}. We assume that $f(x)$ is a formal power series $f(x)=\sum_{n\ge 0}a_nx^n$, $a_0=1$.

\begin{defi}
For any power series $f(x)$, the $q$-Borel transformation of the first kind $\mathcal{B}_q^+$ is
\[\left(\mathcal{B}_q^+f\right)(\xi ):=\sum_{n\ge 0}a_nq^{\frac{n(n-1)}{2}}\xi^n=:\varphi (\xi ).\]
For an entire function $\varphi$, the $q$-Laplace transformation of the first kind $\mathcal{L}_{q,\lambda }^+$ is
\[\left(\mathcal{L}_{q,\lambda}^+\varphi\right)(x):=
\sum_{n\in\mathbb{Z}}\frac{\varphi (\lambda q^n)}{\theta \left(\frac{\lambda q^n}{x}\right)},\quad \lambda\not\in q^{\mathbb{Z}}.\]
Similarly, the $q$-Borel transformation of the second kind $\mathcal{B}_q^-$ is 
\[\left(\mathcal{B}_q^-f\right)(\xi ):=\sum_{n\ge 0}a_nq^{-\frac{n(n-1)}{2}}\xi^n=:g(\xi )\]
and the $q$-Laplace transformation of the second kind $\mathcal{L}_q^-$ is 
\[\left(\mathcal{L}_q^-g\right)(x):=\frac{1}{2\pi i}\int_{|\xi |=r}g(\xi )\theta\left(\frac{x}{\xi}\right)\frac{d\xi}{\xi}.\]
\end{defi}
We remark that each $q$-Borel transformation is a formal inverse of each $q$-Laplace transformation;
\[\mathcal{L}_{q,\lambda}^+\circ\mathcal{B}_q^+f=f,\qquad 
\mathcal{L}_q^-\circ\mathcal{B}_q^-f=f.\]

We can find applications of the $q$-Borel-Laplace method of the first kind in \cite{Z0,Z1,M4}. This summation method is powerful to deal with divergent type basic hypergeometric series and to study the $q$-Stokes phenomenon. Applications of the method of the second kind can be found in \cite{Z2,M0,M1,M4}.  But other examples, for example, applications to bilateral basic hypergeometric series, has not known. 

In this paper, we study an asymptotic formula of the divergent bilateral basic hypergeometric series ${}_1\psi_0(a;-;q,\cdot )$ from viewpoint of connection problems on linear $q$-difference equations. We show the following theorem.

\noindent
\textbf{Theorem.}\textit{ For any} $x\in\mathbb{C}^*\setminus [-\lambda ;q]$ \textit{, we have}
\[\hat{{}_1\psi_0}(a;-;\lambda ;q,x)=\frac{(q;q)_\infty}{\left(q/a;q\right)_\infty}\frac{\theta (aq\lambda )}{\theta (q\lambda )}\frac{\theta\left(ax/\lambda \right)}{\theta\left(x/\lambda\right)}\frac{1}{\left(1/ax;q\right)_\infty},\]
\textit{  where $1<|ax|$.}

Here, the function $\hat{{}_1\psi_0}(a;-;\lambda ;q,x)$ is the $q$-Borel-Laplace transform of the series ${}_1\psi_0(a;-;q,x)$ and the set $[-\lambda ;q]$ is the $q$-spiral which is given by $[\lambda ;q]:=\{\lambda q^k: k\in\mathbb{Z}\}$. We also show the limit $q\to 1-0$ of our asymptotic formula in section four. If we take a suitable limit $q\to 1-0$ of the theorem above, we formally obtain the asymptotic formula of the bilateral hypergeometric series ${}_1H_0$.

\section{Basic notations}
In this section, we fix our notations. We assume that $0<|q|<1$ and $\sigma_q$ is the $q$-difference operator such that $\sigma_qf(x)=f(qx)$. The basic hypergeometric series is
\begin{align}
{}_r\varphi_s(a_1,\dots ,a_r;&b_1,\dots ,b_r;q,x)\notag\\
&:=\sum_{n\ge 0}\frac{(a_1,\dots ,a_r;q)_n}{(b_1,\dots ,b_s;q)_n(q;q)_n}
\left\{(-1)^nq^{\frac{n(n-1)}{2}}\right\}^{1+s-r}x^n.
\label{bh}
\end{align}
The radius of convergence $\rho$ of the basic hypergeometric series is given by \cite{Koo}
\[\rho =
\begin{cases}
\infty ,& \textrm{if }\quad r<s+1, \\
1, & \textrm{if }\quad r=s+1, \\
0, & \textrm{if }\quad r>s+1.
\end{cases}
\]
The bilateral basic hypergeometric series is 
\begin{align}
{}_r\psi_s(a_1,\dots ,a_r;&b_1,\dots ,b_s;q,x)\notag\\
&:=\sum_{n\in\mathbb{Z}}\frac{(a_1,\dots ,a_r;q)_n}{(b_1,\dots ,b_s;q)_n}
\left\{(-1)^nq^{\frac{n(n-1)}{2}}\right\}^{s-r}x^n.
\label{bbh}
\end{align}
If $s<r$, the series \eqref{bbh} diverges for $x\not=0$ and if $r=s$, the series \eqref{bbh} converges $|b_1\dots b_s/a_1\dots a_r|<|x|<1$ (see \cite{GR} for more detail).
The series \eqref{bh} is a $q$-analogue of the generalized hypergeometric function 
\begin{align*}
{}_rF_s (\alpha_1,\dots ,\alpha_r&;\beta_1,\dots ,\beta_s;x)
:=\sum_{n\ge 0}\frac{(\alpha_1,\dots ,\alpha_r)_n}{(\beta_1,\dots ,\beta_s)_nn!}x^n
\end{align*}
and the series \eqref{bbh} is a $q$-analogue of the bilateral hypergeometric function
\[{}_rH_s(\alpha_1,\dots ,\alpha_r;\beta_1,\dots ,\beta_s;x)
:=\sum_{n\in\mathbb{Z}}\frac{(\alpha_1,\dots ,\alpha_r)_n}{(\beta_1;\dots ,\beta_s)_n}x^n.\]
By D'Alembert's ratio test, it can be checked that ${}_rH_r$ converges only for $|x|=1$ \cite{Slater}, provided that $\Re (\beta_1+\dots +\beta_r-\alpha_1-\dots -\alpha_r)>1$.

The $q$-exponential function $e_q(x)$ is
\[e_q(x):={}_1\varphi_0(0;-;q,x)
=\sum_{n\ge 0}\frac{x^n}{(q;q)_n}\]
The function $e_q(x)$ has the infinite product representation as follows;
\[e_q(x)=\frac{1}{(x;q)_\infty},\qquad |x|<1.\]
The limit $q\to 1-0$ of $e_q(x)$ is the exponential function \cite{GR};
\begin{equation}
\lim_{q\to 1-0}e_q(x(1-q))=e^x.
\label{limexp}
\end{equation}

The $q$-gamma function $\Gamma_q(x)$ is
\[\Gamma_q(x):=\frac{(q;q)_\infty}{(q^x;q)_\infty}(1-q)^{1-x},\qquad 0<q<1.\]
The limit $q\to 1-0$ of $\Gamma_q(x)$ gives the gamma gunction \cite{GR};
\begin{equation}
\lim_{q\to 1-0}\Gamma_q(x)=\Gamma (x).\label{limgamma}
\end{equation}
The theta function of Jacobi is given by
\[\theta (x):=\sum_{n\in\mathbb{Z}}q^{\frac{n(n-1)}{2}}x^n,\qquad \forall x\in\mathbb{C}^*.\]
Jacobi's triple product identity is 
\[\theta (x)=(q,-x,-q/x;q)_\infty\]
and the theta function satisfies the following $q$-difference equation
\[\theta (q^kx)=x^{-k}q^{-\frac{k(k-1)}{2}}\theta (x),\qquad \forall k\in\mathbb{Z}.\]
The inversion formula is
\begin{equation}
\theta (x)=x\theta (1/x).
\label{inv}
\end{equation}
We remark that $\theta (\lambda q^k/x)=0$ if and only if $x\in[-\lambda ;q]$. 
In our study, the following proposition \cite{Z1} is useful to consider the limit $q\to 1-0$ of our formula.
\begin{prop} For any $x\in\mathbb{C}^* (-\pi <\arg x<\pi )$, we have
\begin{equation}
\lim_{q\to 1-0}\frac{\theta (q^\alpha x)}{\theta (q^\beta x)}=x^{\beta -\alpha}\label{limt1}
\end{equation}
and 
\begin{equation}
\lim_{q\to 1-0}\frac{\theta \left(\dfrac{q^\alpha x}{(1-q)}\right)}{\theta \left(\dfrac{q^\beta x}{(1-q)}\right)}(1-q)^{\beta -\alpha}=
\left(\frac{1}{x}\right)^{\alpha -\beta}.\label{limt2}
\end{equation}
\end{prop}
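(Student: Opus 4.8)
The plan is to reduce both identities, through Jacobi's triple product $\theta(x)=(q,-x,-q/x;q)_\infty$, to the elementary behaviour of ratios of $q$-shifted factorials as $q\to1-0$. Writing the numerator and denominator of each ratio via the triple product, the universal factor $(q;q)_\infty$ cancels, so everything is governed by a single \textbf{Key Lemma}: for fixed $z\in\mathbb{C}\setminus[1,\infty)$,
\[\lim_{q\to1-0}\frac{(q^\alpha z;q)_\infty}{(q^\beta z;q)_\infty}=(1-z)^{\beta-\alpha}.\]
I would prove this from the $q$-binomial theorem, which gives $\frac{(q^\gamma z;q)_\infty}{(z;q)_\infty}=\sum_{n\ge0}\frac{(q^\gamma;q)_n}{(q;q)_n}z^n$; since $\frac{(q^\gamma;q)_n}{(q;q)_n}\to\frac{(\gamma)_n}{n!}$ termwise and the series is dominated uniformly near $q=1$ on compacta, the limit is $\sum_{n\ge0}\frac{(\gamma)_n}{n!}z^n=(1-z)^{-\gamma}$; dividing the cases $\gamma=\alpha$ and $\gamma=\beta$ and continuing analytically in $z$ off $[1,\infty)$ yields the lemma.

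For \eqref{limt1} this is immediate. The triple product gives
\[\frac{\theta(q^\alpha x)}{\theta(q^\beta x)}=\frac{(-q^\alpha x;q)_\infty}{(-q^\beta x;q)_\infty}\cdot\frac{(-q^{1-\alpha}/x;q)_\infty}{(-q^{1-\beta}/x;q)_\infty},\]
and the Key Lemma (with $z=-x$ and exponents $\alpha,\beta$; and with $z=-1/x$ and exponents $1-\alpha,1-\beta$) sends the right-hand side to $(1+x)^{\beta-\alpha}(1+1/x)^{\alpha-\beta}=x^{\beta-\alpha}$. The hypothesis $-\pi<\arg x<\pi$ is exactly what keeps $1+x$, $1+1/x$ and $x$ off the cut, so the principal branches multiply consistently and the simplification $\frac{1+x}{1+1/x}=x$ holds at the level of logarithms.

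The second identity \eqref{limt2} is the genuine difficulty. Morally it is \eqref{limt1} continued to an infinite argument: applying the Key Lemma formally with $z=-x/(1-q)$ gives $(1+x/(1-q))^{\beta-\alpha}$, which after multiplication by $(1-q)^{\beta-\alpha}$ becomes $(1-q+x)^{\beta-\alpha}\to x^{\beta-\alpha}$. The rigorous content is justifying this, since after the same triple-product split
\[\frac{\theta\!\left(\frac{q^\alpha x}{1-q}\right)}{\theta\!\left(\frac{q^\beta x}{1-q}\right)}=\frac{\bigl(-\frac{q^\alpha x}{1-q};q\bigr)_\infty}{\bigl(-\frac{q^\beta x}{1-q};q\bigr)_\infty}\cdot\frac{\bigl(-q^{1-\alpha}\frac{1-q}{x};q\bigr)_\infty}{\bigl(-q^{1-\beta}\frac{1-q}{x};q\bigr)_\infty}\]
the second factor tends to $1$ by the $q$-exponential limit \eqref{limexp} (each of its symbols has the shape $(w(1-q);q)_\infty\to e^{-w}$), but the first factor has argument $x/(1-q)\to\infty$, so the Key Lemma no longer applies. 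This diverging-argument factor is the main obstacle.

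To handle it I would pass to logarithms and differentiate in the exponent: with $G(\gamma):=\sum_{n\ge0}\log\bigl(1+q^{\gamma+n}y\bigr)$ and $y=x/(1-q)$, one has $G'(\gamma)=\log q\sum_{n\ge0}\frac{q^{\gamma+n}y}{1+q^{\gamma+n}y}$, and writing $q=e^{-\epsilon}$ the sum is a Riemann sum for $\frac1\epsilon\int_0^\infty\frac{ye^{-t}}{1+ye^{-t}}\,dt=\frac{\log(1+y)}{\epsilon}$, so $G'(\gamma)\to-\log(1+y)$ uniformly on the bounded $\gamma$-interval, the $\gamma$-dependent corrections being $O(\epsilon)$. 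Hence $\log\frac{(-q^\alpha y;q)_\infty}{(-q^\beta y;q)_\infty}=G(\alpha)-G(\beta)=(\alpha-\beta)\bigl(-\log(1+y)\bigr)+o(1)$, and since $\log(1+y)=\log x-\log(1-q)+o(1)$ this factor behaves like $x^{\beta-\alpha}(1-q)^{\alpha-\beta}$; multiplying by the prefactor $(1-q)^{\beta-\alpha}$ cancels the divergent $(1-q)$-power and leaves $x^{\beta-\alpha}=(1/x)^{\alpha-\beta}$. The points deserving the most care, and where I expect the real work, are the uniformity of the Riemann-sum approximation while the argument $y$ simultaneously diverges and the verification that the subleading, $\gamma$-dependent terms vanish rather than accumulate; here again $-\pi<\arg x<\pi$ guarantees that $\log(1+y)$ and $\log x$ are read on the correct branch.
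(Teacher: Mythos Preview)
The paper does not prove this proposition at all; it is quoted from Zhang's paper \cite{Z1} and used as an input in the proof of Theorem~\ref{thm2}. So there is no argument in the paper to compare yours against.

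That said, your approach is the natural one and is essentially correct. For \eqref{limt1} the triple-product reduction to your Key Lemma is clean; the Key Lemma is just the paper's \eqref{limbin} iterated, and your derivation via the $q$-binomial theorem plus dominated convergence and analytic continuation is standard. The branch check $(1+x)^{\beta-\alpha}(1+1/x)^{\alpha-\beta}=x^{\beta-\alpha}$ does go through under $-\pi<\arg x<\pi$: for $x$ off the negative axis, $1+x$ and $1+1/x$ lie in conjugate open half-planes (or both on $(0,\infty)$), and one verifies $\arg(1+x)-\arg(1+1/x)=\arg x$ without a $2\pi$ slip.

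For \eqref{limt2} your diagnosis is right: the small-argument factor tends to $1$ by \eqref{limexp}, and all the work is in the diverging-argument factor. Your logarithmic-derivative Riemann-sum computation gives the correct leading behaviour, and the only genuine gap is the one you already name, uniformity as $y=x/(1-q)\to\infty$ and $\epsilon=-\log q\to0$ simultaneously. One efficient way to close it: with $f(t)=ye^{-t}/(1+ye^{-t})$, the error between $\epsilon\sum_{n\ge0}f\bigl((\gamma+n)\epsilon\bigr)$ and $\int_0^\infty f(t)\,dt=\log(1+y)$ is bounded by $\epsilon$ times the total variation of $f$ on $[0,\infty)$ plus an $O(\epsilon)$ boundary term, and that total variation is bounded uniformly in $y$ once $x$ is confined to a sector away from $(-\infty,0]$. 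This makes your $o(1)$ honest and independent of the blow-up of $y$, and the rest of your computation then yields $(1/x)^{\alpha-\beta}$ as stated.
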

We also use the limit \cite{GR} as follows;
\begin{equation}
\lim_{q\to 1-0}\frac{(xq^\alpha ;q)_\infty}{(x;q)_\infty}=(1-x)^{-\alpha},\qquad |x|<1.
\label{limbin}
\end{equation}

\section{An asymptotic formula of the divergent series ${}_1\psi_0(a;-;q,x)$}
In this section, we show an asymptotic formula of the divergent series ${}_1\psi_0(a;-;q,x)$. At first, we review Ramanujan's sum for ${}_1\psi_1(a;b;q,z)$ and its property. Ramanujan gives the following sum \cite{Hardy}
\begin{equation}
{}_1\psi_1(a;b;q,z)=\frac{(q,b/a,az,q/az;q)_\infty}{(b,q/a,z,b/az;q)_\infty},\qquad \left|b/a\right|<|z|<1.
\label{Rsum}
\end{equation}
We can regard this sum as a $q$-analogue of the bilateral binomial theorem \cite{Horn} as follows;

\noindent
\textbf{Theorem.} \textit{(Horn \cite{Horn})} \textit{If $z$ is a complex number such that $|z|=1$, we have}\begin{equation}
{}_1H_1(\alpha ;\beta ;z)=\frac{\Gamma (\beta )\Gamma (1-\alpha )}{\Gamma (\beta -\alpha )}\frac{(1-z)^{\beta -\alpha -1}}{(-z)^{\beta -1}},
\label{bbt}
\end{equation}
\textit{provided that $\Re (\beta -\alpha )>1$.}

Let $z$ is a complex number such that $-\pi <\arg z <\pi$. The summation \eqref{Rsum} is rewritten by
\begin{equation}
\frac{(q,b/a,az,q/az;q)_\infty}{(b,q/a,z,b/az;q)_\infty}
=\frac{(q,b/a;q)_\infty}{(b,q/a;q)_\infty}\frac{\theta (-az)}{\theta (-aqz/b)}\frac{(aqz/b;q)_\infty}{(z;q)_\infty}.
\label{bin3}
\end{equation}
In \eqref{bin3}, we put $a=q^\alpha$ and $b=q^\beta$ (with $\Re (\beta -\alpha )>1$), we obtain
\begin{equation}
{}_1\psi_1 (q^\alpha ;q^\beta ;q,z)
=\frac{\Gamma_q(\beta )\Gamma_q (1-\alpha )}{\Gamma_q(\beta -\alpha )}
\frac{\theta (q^\alpha (-z))}{\theta (q^{\alpha +1-\beta }(-z))}
\frac{(q^{\alpha +1-\beta}z;q)_\infty}{(z;q)_\infty},
\label{qbb}
\end{equation}
where $|q^{\beta -\alpha}|<|z|<1$. Combining \eqref{limgamma}, \eqref{limt1} and \eqref{limbin}, we can check out that the limit $q\to 1-0$ of \eqref{qbb} gives the bilateral binomial theorem \eqref{bbt}, provided that $-\pi <\arg z <\pi$.

The bilateral basic hypergeometric series ${}_1\psi_1(a;b;q,z)$ satisfies the $q$-difference equation
\begin{equation}
\left(\frac{b}{q}-az\right)u(qz)+(z-1)u(z)=0.
\label{bqua}
\end{equation}
We consider the degeneration of the equation \eqref{bqua} in the next section.
\subsection{Local solutions of the degenerated equation}
In the equation \eqref{bqua}, we put $z=bx$ and take the limit $b\to\infty$, we obtain the equation
\begin{equation}
\left\{\left(\frac{1}{q}-ax\right)\sigma_q+x\right\} \tilde{u}(x)=0.\label{bqua2}
\end{equation}
The formal solution of \eqref{bqua2} is
\begin{equation}\label{b2so}
\tilde{u}(x)={}_1\psi_0(a;-;q,x),
\end{equation}
which is divergent around the origin. We consider ``the basic hypergeometric type'' solution around the infinity. We assume that the solution around the infinity is given by
\[\tilde{u}_\infty (x):=\frac{\theta (ax)}{\theta (x)}v_\infty (x)=\frac{\theta (ax)}{\theta (x)}\sum_{n\ge 0}v_nx^{-n}, \qquad v_0=1.\]
By the $q$-difference equation of the theta function, we obtain the equation
\[\left\{\left(\frac{1}{aq}-x\right)\sigma_q+x\right\}v_\infty (x)=0.\]
We remark that the function $\theta (ax)/\theta (x)$ satisfies the following $q$-difference equation 
\[u(qx)=q^{-\alpha }u(x)\]
which is also satisfied by the function $u(x)=x^{-\alpha}$ with $\log_qa=\alpha$.
We can check out that the series $v_\infty (x)$ is
\[v_\infty (x)=e_q\left(\frac{1}{ax}\right)=\frac{1}{\left(1/ax;q\right)_\infty},\qquad \left|\frac{1}{ax}\right|<1.\]
Therefore, one of the solution of \eqref{bqua2} around the infinity is
\begin{equation}\label{b2si}
\tilde{u}_\infty(x)=\frac{\theta (ax)}{\theta (x)}e_q\left(\frac{1}{ax}\right).
\end{equation}
In the following section, we study the relation between these solutions \eqref{b2so} and \eqref{b2si} with using the $q$-Borel-Laplace method of the first kind. 
\begin{rem}We remark that ``the bilateral basic hypergeometric type solution'' of \eqref{bqua2} around the infinity is given by
\[w_\infty (x)={}_1\psi_1\left(0;\frac{q}{a};q,\frac{1}{ax}\right)
=\sum_{n\in\mathbb{Z}}\frac{1}{(q/a;q)_n}\left(\frac{1}{ax}\right)^n.\]
But this solution is not suitable for our argument. We choose and deal with the solution \eqref{b2si}.
\end{rem}
\subsection{An asymptotic formula}
In this section, we show an asymptotic formula of \eqref{b2so} with using the $q$-Borel-Laplace transformations of the first kind. We show the following theorem.
\begin{thm}\label{teiri1}For any $x\in\mathbb{C}^*\setminus [-\lambda ;q]$, we have
\[\hat{{}_1\psi_0}(a;-;\lambda ;q,x)=\frac{(q;q)_\infty}{\left(q/a;q\right)_\infty}\frac{\theta (aq\lambda )}{\theta (q\lambda )}\frac{\theta\left(ax/\lambda \right)}{\theta\left(x/\lambda\right)}\frac{1}{\left(1/ax;q\right)_\infty},\]
where $1<|ax|$.
\end{thm}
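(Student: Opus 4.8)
The plan is to evaluate $\hat{{}_1\psi_0}(a;-;\lambda;q,x)=\bigl(\mathcal{L}_{q,\lambda}^{+}\circ\mathcal{B}_q^{+}\bigr)\,{}_1\psi_0(a;-;q,\cdot)(x)$ explicitly, using Ramanujan's summation formula \eqref{Rsum} twice — once to sum the $q$-Borel transform and once to sum the $q$-Laplace series — and then reorganising the theta factors by means of the inversion formula \eqref{inv}. The computational core is short; the care needed is almost entirely in setting up the composition and in tracking convergence conditions.

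\emph{Step 1: the $q$-Borel transform.} The coefficient of $x^n$ in ${}_1\psi_0(a;-;q,x)$ is $(a;q)_n\{(-1)^nq^{n(n-1)/2}\}^{-1}$, so $\mathcal{B}_q^{+}$, which multiplies the $n$-th coefficient by $q^{n(n-1)/2}$, removes the Gaussian factor and produces the bilateral series $\varphi(\xi):=\sum_{n\in\mathbb{Z}}(a;q)_n(-\xi)^n$. Since $(a;q)_n\to(a;q)_\infty$ as $n\to+\infty$ while $(a;q)_{-m}=(-1/a)^m q^{m(m+1)/2}/(q/a;q)_m$ decays faster than any geometric sequence as $m\to+\infty$, this converges on the annulus $0<|\xi|<1$ (for generic $a$). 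The key observation is that $(0;q)_n=1$ for every $n\in\mathbb{Z}$, so $\varphi(\xi)={}_1\psi_1(a;0;q,-\xi)$; putting $b=0$ in \eqref{Rsum} and using the Jacobi triple product $\theta(y)=(q,-y,-q/y;q)_\infty$ gives
\[\varphi(\xi)=\frac{\theta(a\xi)}{(q/a;q)_\infty\,(-\xi;q)_\infty},\]
which moreover furnishes the meromorphic continuation of $\varphi$ to all of $\mathbb{C}^{*}$, the poles being the zeros of $(-\xi;q)_\infty$. As a check, this $\varphi$ satisfies $(1+\xi)\varphi(\xi)=a\xi\,\varphi(q\xi)$, the equation obtained by applying $\mathcal{B}_q^{+}$ to \eqref{bqua2}.

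\emph{Step 2: the $q$-Laplace transform.} Next I would form $\sum_{n\in\mathbb{Z}}\varphi(\lambda q^n)/\theta(\lambda q^n/x)$, reading $\varphi$ as its continuation above. The mechanism that collapses the sum is that $\theta(a\lambda q^n)=(a\lambda)^{-n}q^{-n(n-1)/2}\theta(a\lambda)$ and $\theta(\lambda q^n/x)=(\lambda/x)^{-n}q^{-n(n-1)/2}\theta(\lambda/x)$ carry the \emph{same} factor $q^{-n(n-1)/2}$, which therefore cancels; combined with $1/(-\lambda q^n;q)_\infty=(-\lambda;q)_n/(-\lambda;q)_\infty$, the $n$-th summand reduces to a constant times $(-\lambda;q)_n(1/ax)^n$. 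Summing over $n$ and applying \eqref{Rsum} once more, now as ${}_1\psi_1(-\lambda;0;q,1/ax)$ — which converges exactly when $|1/ax|<1$, i.e. $1<|ax|$, precisely the hypothesis of the theorem — yields
\[\hat{{}_1\psi_0}(a;-;\lambda;q,x)=\frac{\theta(a\lambda)\,\theta(\lambda/ax)}{(q/a;q)_\infty\,(-\lambda;q)_\infty\,(-q/\lambda;q)_\infty\,\theta(\lambda/x)\,(1/ax;q)_\infty}.\]

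\emph{Step 3: cleaning up, and the main obstacle.} Finally I would use $(-\lambda;q)_\infty(-q/\lambda;q)_\infty=\theta(\lambda)/(q;q)_\infty$ (Jacobi triple product again), and then $\theta(qy)=y^{-1}\theta(y)$ together with \eqref{inv} to write $\theta(a\lambda)=a\lambda\,\theta(aq\lambda)$, $\theta(\lambda)=\lambda\,\theta(q\lambda)$, $\theta(\lambda/ax)=\tfrac{\lambda}{ax}\theta(ax/\lambda)$ and $\theta(\lambda/x)=\tfrac{\lambda}{x}\theta(x/\lambda)$; the monomial prefactors match and cancel, leaving exactly the stated formula. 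The one genuinely delicate point is conceptual rather than computational: the transforms in the definitions are stated for honest power series and entire functions, whereas $\mathcal{B}_q^{+}{}_1\psi_0$ is merely a Laurent series on an annulus with a meromorphic continuation, so one must first legitimize the composition $\mathcal{L}_{q,\lambda}^{+}\circ\mathcal{B}_q^{+}$ on such an object and record the genericity conditions — $\lambda\notin q^{\mathbb{Z}}$ with the points $\lambda q^n$ avoiding the poles of $\varphi$, $x\notin[-\lambda;q]$ so that $\theta(\lambda q^n/x)\neq 0$, $a$ generic, and $1<|ax|$ — under which all the series in sight converge. Once that bookkeeping is in place, the two applications of \eqref{Rsum} and the theta-function identities finish the proof.
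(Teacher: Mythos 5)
Your proposal is correct and follows essentially the same route as the paper: apply $\mathcal{B}_q^+$ to get ${}_1\psi_1(a;0;q,-\xi)$, sum it by Ramanujan's formula \eqref{Rsum}, use the quasi-periodicity of $\theta$ to collapse the $q$-Laplace sum into ${}_1\psi_1(-\lambda;0;q,1/ax)$, apply \eqref{Rsum} once more, and finish with the theta inversion and shift identities. Your extra bookkeeping on convergence of the bilateral Borel transform and on the genericity of $\lambda$ is a welcome refinement of details the paper leaves implicit, but it does not change the argument.
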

\begin{proof}We apply the $q$-Borel transformation $\mathcal{B}_q^+$ to the series ${}_1\psi_0(a;-;q,x)$. Then, 
\[\psi (\xi ):=\left(\mathcal{B}_q^+{}_1\psi_0\right)(\xi )
={}_1\psi_1(a;0;q,-\xi ).\]
By Ramanujan's sum \eqref{Rsum}, we obtain the infinite product representation of $\psi (\xi )$ as follows;
\[\psi (\xi )=\frac{(q;q)_\infty}{(q/a;q)_\infty}\frac{\theta (a\xi )}{\theta (\xi )}(-q/\xi ;q)_\infty .\]
We apply the $q$-Laplace transformation $\mathcal{L}_{q,\lambda}^+$ to $\psi (\xi )$.
\begin{align}\left(\mathcal{L}_{q,\lambda}^+\psi \right)(x)&
=\sum_{n\in\mathbb{Z}}\frac{\psi (\lambda q^n)}{\theta \left(\frac{\lambda q^n}{x}\right)}
=\frac{(q;q)_\infty}{(q/a;q)_\infty}\sum_{n\in\mathbb{Z}}\frac{1}{\theta (\frac{\lambda q^n}{x})}\frac{\theta (a\lambda q^n)}{\theta (\lambda q^n)}\left(-\frac{q}{\lambda q^n};q\right)_\infty\notag \\
&=\frac{(q;q)_\infty}{(q/a;q)_\infty}\frac{1}{\theta (\lambda /x)}\frac{\theta (a\lambda )}{\theta (\lambda )}\sum_{n\in\mathbb{Z}}q^{\frac{n(n-1)}{2}}\left(\frac{\lambda}{ax}\right)^n \left(-\frac{q}{\lambda q^n};q\right)_\infty\notag \\
&=\frac{(q;q)_\infty}{(q/a;q)_\infty}\frac{1}{\theta (\lambda /x)}\frac{\theta (a\lambda )}{\theta (\lambda )}(-q/\lambda ;q)_\infty {}_1\psi_1(-\lambda ;0;q,1/ax).\label{qblpsi}
\end{align}
We remark that 
\[{}_1\psi_1(-\lambda ;0;q,1/ax)=\frac{\theta (\lambda /ax)}{(-q/\lambda ;q)_\infty (1/ax;q)_\infty}.\]
Combining \eqref{qblpsi} and \eqref{inv}, we obtain the conclusion.
\end{proof}

Theorem \ref{teiri1} is rewritten by 
\[\left(\mathcal{L}_{q,\lambda}^+\circ \mathcal{B}_q^+\tilde{u}\right)(x)
=\frac{(q;q)_\infty}{(q/a;q)_\infty}\frac{\theta (aq\lambda )}{\theta (q\lambda )}\frac{\theta (ax/\lambda )}{\theta (x/\lambda )}\frac{\theta (x)}{\theta (ax)}\tilde{u}_\infty (x).\]
We define the function $C(x;q)$ as follows;
\[C(x;q):=\frac{(q;q)_\infty}{(q/a;q)_\infty}\frac{\theta (aq\lambda )}{\theta (q\lambda )}\frac{\theta (ax/\lambda )}{\theta (x/\lambda )}\frac{\theta (x)}{\theta (ax)}.\]
Then, $C(x;q)$ is single valued as a function of $x$.
\begin{cor}The function $C(x;q)$ is the $q$-elliptic function, i.e., $C(x;q)$ satisfies 
\[C(qx;q)=C(x;q),\qquad C(e^{2\pi i}x;q)=C(x;q).\]
\end{cor}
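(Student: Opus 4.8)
The plan is to verify the two functional equations directly from the definition of $C(x;q)$, using only that several of its factors are constant in $x$ together with the $q$-difference equation of the theta function recorded in Section~2. First I would note that $\frac{(q;q)_\infty}{(q/a;q)_\infty}\frac{\theta(aq\lambda)}{\theta(q\lambda)}$ is independent of $x$, so the whole matter reduces to the $x$-dependent part
\[
F(x):=\frac{\theta(ax/\lambda)\,\theta(x)}{\theta(x/\lambda)\,\theta(ax)},
\]
and it suffices to show $F(qx)=F(x)$ and that $F$ is single valued on $\mathbb{C}^*\setminus[-\lambda;q]$.

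For the $q$-periodicity, I would apply $\theta(qy)=y^{-1}\theta(y)$ --- the case $k=1$ of $\theta(q^ky)=y^{-k}q^{-k(k-1)/2}\theta(y)$ --- to each of the four theta factors of $F(qx)$. The monomial prefactors so produced combine into
\[
\frac{(ax/\lambda)^{-1}\,x^{-1}}{(x/\lambda)^{-1}\,(ax)^{-1}}=\frac{(x/\lambda)(ax)}{(ax/\lambda)\,x}=1,
\]
the final equality holding because the product $(ax/\lambda)\cdot x$ of the arguments of the two numerator thetas equals the product $(x/\lambda)\cdot(ax)$ of the arguments of the two denominator thetas. Hence $F(qx)=F(x)$, and therefore $C(qx;q)=C(x;q)$.

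For the invariance under $x\mapsto e^{2\pi i}x$, I would observe that $\theta(x)=\sum_{n\in\mathbb{Z}}q^{n(n-1)/2}x^n$ is a Laurent series, so each factor $\theta(cx)$ occurring in $C(x;q)$ is a single-valued function of $x\in\mathbb{C}^*$; a quotient of such functions is single valued away from the zeros of its denominator, which here are the points of $[-\lambda;q]\cup[-1/a;q]$. Since Theorem~\ref{teiri1} places us on $\mathbb{C}^*\setminus[-\lambda;q]$, the function $C(x;q)$ is single valued there, so $C(e^{2\pi i}x;q)=C(x;q)$; combined with the previous paragraph this yields $q$-ellipticity.

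I do not anticipate a genuine obstacle. The only point needing care is the bookkeeping of the monomial factors generated by the theta transformation and the verification that they cancel --- which works precisely because the numerator and the denominator of $F$ each contain two theta factors and the two sets of arguments have equal product.
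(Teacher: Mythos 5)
Your verification is correct and is exactly the argument the paper leaves implicit (the corollary is stated without proof after observing single-valuedness): the constant prefactor drops out, the theta relation $\theta(qy)=y^{-1}\theta(y)$ applied to the four factors yields monomials that cancel because the products of the numerator and denominator arguments both equal $ax^2/\lambda$, and single-valuedness follows since each theta factor is a convergent Laurent series on $\mathbb{C}^*$. No issues.
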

\section{The limit $q\to 1-0$ of the asymptotic formula}
In this section, we give the limit $q\to 1-0$ of our asymptotic formula. We assume that $x\in\mathbb{C}^*\setminus [-\lambda ;q]$ ($-\pi <\arg x<\pi $). We put $a=q^\alpha $ and $x\mapsto x/(1-q)$ in theorem 1 to consider the limit. We remark that the limit of the left-hand side in theorem 1 formally gives the bilateral hypergeometric series ${}_1H_0(\alpha ;-;-x)$. We show the following theorem. \begin{thm}\label{thm2}For any $x\in\mathbb{C}^*\setminus [-\lambda ;q]$ ($-\pi <\arg x<\pi $), we have
\[\lim_{q\to 1-0}
\frac{(q;q)_\infty}{(q^{1-\alpha};q)_\infty}\frac{\theta (q^{\alpha +1} \lambda )}{\theta (q\lambda )}\frac{\theta\left(\dfrac{q^\alpha x}{(1-q)\lambda} \right)}{\theta \left(\dfrac{x}{(1-q)\lambda}\right)}\frac{1}{\left((1-q)\frac{q^{-\alpha}}{x};q\right)_\infty}
=\frac{\Gamma (1-\alpha )}{x^\alpha}e^{\frac{1}{x}},\]
where $1<|x|$.
\end{thm}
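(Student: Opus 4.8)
The plan is to take the right-hand side of Theorem~\ref{teiri1}, perform the substitutions $a=q^{\alpha}$ and $x\mapsto x/(1-q)$ that already appear in the statement of the present theorem, and then let $q\to 1-0$ factor by factor, shuffling the powers of $(1-q)$ so that each resulting group matches one of the limit formulas collected in Section~2. After the substitution the four factors of Theorem~\ref{teiri1} read $(q;q)_\infty/(q^{1-\alpha};q)_\infty$, $\theta(q^{\alpha+1}\lambda)/\theta(q\lambda)$, $\theta\bigl(q^{\alpha}x/((1-q)\lambda)\bigr)/\theta\bigl(x/((1-q)\lambda)\bigr)$, and $1/\bigl((1-q)q^{-\alpha}/x;q\bigr)_\infty$, which is exactly the expression whose limit is asserted.

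For the first factor I would use the definition of $\Gamma_q$ to write $(q;q)_\infty/(q^{1-\alpha};q)_\infty=\Gamma_q(1-\alpha)(1-q)^{-\alpha}$ and carry the stray $(1-q)^{-\alpha}$ over to the third factor; then $\Gamma_q(1-\alpha)\to\Gamma(1-\alpha)$ by \eqref{limgamma}. For $\theta(q^{\alpha+1}\lambda)/\theta(q\lambda)$ I would apply \eqref{limt1} with base point $\lambda$ and exponents $\alpha+1$ and $1$, obtaining the limit $\lambda^{-\alpha}$. For the combined factor $(1-q)^{-\alpha}\,\theta\bigl(q^{\alpha}x/((1-q)\lambda)\bigr)/\theta\bigl(x/((1-q)\lambda)\bigr)$ I would apply \eqref{limt2} with base point $x/\lambda$ and exponents $\alpha$ and $0$, obtaining $(\lambda/x)^{\alpha}$; this is the step that uses the hypothesis $-\pi<\arg x<\pi$. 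Finally $1/\bigl((1-q)q^{-\alpha}/x;q\bigr)_\infty=e_q\bigl((1-q)q^{-\alpha}/x\bigr)$, and since $|x|>1$ its argument has modulus below $1$ once $q$ is close enough to $1$, so the product representation is legitimate and the limit should be $e^{1/x}$ by a variant of \eqref{limexp}. Multiplying the four limits gives $\Gamma(1-\alpha)\cdot\lambda^{-\alpha}\cdot(\lambda/x)^{\alpha}\cdot e^{1/x}=\Gamma(1-\alpha)x^{-\alpha}e^{1/x}$, which is the claimed value.

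The hard part will be the last factor, because \eqref{limexp} as stated concerns $e_q\bigl(c(1-q)\bigr)$ for a \emph{fixed} constant $c$, whereas here the coefficient $q^{-\alpha}/x$ of $(1-q)$ still depends on $q$ (it tends to $1/x$). I would deal with this by expanding $e_q\bigl((1-q)q^{-\alpha}/x\bigr)=\sum_{n\ge0}\dfrac{(1-q)^{n}}{(q;q)_{n}}\bigl(q^{-\alpha}/x\bigr)^{n}$, noting that $(1-q)^{n}/(q;q)_{n}\to 1/n!$ and stays bounded, that $q^{-\alpha}\to 1$, and that for $q$ near $1$ the series is dominated uniformly by a fixed convergent series (because $|q^{-\alpha}/x|<1$ there), so term-by-term passage to the limit gives $\sum_{n\ge0}(1/x)^{n}/n!=e^{1/x}$. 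With this estimate every one of the four limits exists and is finite, hence the limit of the product is the product of the limits; a short check that the denominators $\theta(q\lambda)$ and $\theta\bigl(x/((1-q)\lambda)\bigr)$ are nonzero (which holds since $x\notin[-\lambda;q]$ and $\lambda\notin q^{\mathbb{Z}}$) and that the hypotheses of Theorem~\ref{teiri1} are met for $q$ close to $1$ then completes the proof.
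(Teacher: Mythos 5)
Your proposal is correct and follows essentially the same route as the paper: rewrite the first factor as $\Gamma_q(1-\alpha)(1-q)^{-\alpha}$, shift the $(1-q)^{-\alpha}$ onto the theta ratio in $x/\lambda$, recognize the last factor as $e_q\bigl((1-q)q^{-\alpha}/x\bigr)$, and apply \eqref{limgamma}, \eqref{limt1}, \eqref{limt2} and \eqref{limexp} factor by factor. Your extra uniform-domination argument for the $e_q$ factor (whose argument still depends on $q$) is a welcome refinement of a step the paper settles by simply citing \eqref{limexp}.
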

We give the proof of theorem \ref{thm2}. 
\begin{proof}
The right-hand side of theorem 1 is rewritten by 
\begin{align}
&\frac{(q;q)_\infty}{(q^{1-\alpha};q)_\infty}
\frac{\theta (q^{\alpha +1} \lambda )}{\theta (q\lambda )}\frac{\theta \left(\frac{q^\alpha}{(1-q)}\frac{x}{\lambda}\right)}{\theta\left(\frac{1}{(1-q)}\frac{x}{\lambda}\right)}\frac{1}{\left((1-q)\frac{q^{-\alpha}}{x};q\right)_\infty}\notag \\
&=\Gamma_q(1-\alpha )\frac{\theta (q^{\alpha +1}\lambda )}{\theta (q\lambda )}
\left\{\frac{\theta \left(\frac{q^\alpha }{(1-q)}\frac{x}{\lambda}\right)}{\theta\left(\frac{q^0}{(1-q)}\frac{x}{\lambda}\right)}
(1-q)^{-\alpha}\right\}
e_q\left((1-q)\frac{1}{q^{\alpha}x}\right).\notag
\end{align}
Combining \eqref{limgamma}, \eqref{limt1}, \eqref{limt2} and \eqref{limexp}, we obtain the conclusion.
\end{proof}

\section*{Acknowledgement} 
My heartfelt appreciation goes to Professor Yousuke Ohyama who provided carefully considered feedback and valuable comments.

\end{document}